\documentclass[a4paper,11pt]{article}

\usepackage[top=3.0cm,bottom=3.0cm,left=2.5cm,right=2.5cm]{geometry}
\usepackage{tikz}
\usepackage{graphics,epsfig,psfrag}
\usepackage{graphicx}
\usepackage{amsfonts}
\usepackage{amssymb}
\usepackage{amsmath}
\usepackage{amsthm}
\usepackage[english]{babel}
\usepackage{ctex}
\usepackage{hyperref}
\usepackage{cases}
\usepackage{authblk}
\usepackage{mathtools}
\usepackage{pifont}
\usepackage[T1]{fontenc}
\usepackage{enumitem}

\newtheorem{theorem}{Theorem}[section]
\newtheorem{lemma}[theorem]{Lemma}

\newtheorem{claim}[]{\noindent Claim}[]

\newtheorem{observation}[theorem]{Observation}

\begin{document}

\title{Tree-partitions of graphs with bounded tree-depth}

        \author[1]{Rong Chen\footnote{Email: rongchen@fzu.edu.cn (Rong Chen).}}
		\author[1]{Huayue Liu\footnote{Email: liuhuayue0625@126.com.}}

\affil[1]{Center for Discrete Mathematics, Fuzhou University\\

Fuzhou, People's Republic of China}

	\date{}
	\maketitle

\begin{abstract}
Wood~ recently showed that every graph $G$ of pathwidth $h$ and $\Delta(G)\ge1$ admits a $T$-partition of width at most $4(h+1)^2\Delta(G)$ for some tree $T$ with $pw(T)\leq2h+1$. In this paper, we establish an analogous result for tree-depth, which is a stronger parameter than pathwidth. We prove that every connected graph with tree-depth $h$ admits a $T$-partition of width at most $\mathrm{max}\{1, (4h-10)\Delta(G)+1\}$ for some tree $T$ with $\operatorname{rad}(T)\leq h-1$.
\end{abstract}

	\textbf{Mathematics Subject Classification}: 05C75
	
	\textbf{Keywords}: tree-partitions; tree-depth

\section{Introduction}
All graphs in this paper are finite and simple.
Treewidth and pathwidth are graph parameters that respectively measure how similar a given graph is to a tree or a path. They are of fundamental importance in structural and algorithmic graph theory. Throughout this paper, let $tw(G)$ and $pw(G)$ denote the treewidth and pathwidth of a graph $G$, respectively.


Let $T$ be a non-empty tree. A \emph{$T$-partition} of a graph $G$ is a partition $(B_x:x\in V(T))$ of $V(G)$ such that, for every edge $uv\in E(G)$ with $u\in B_x$ and $v\in B_y$, either $x=y$ or $xy\in E(T)$. The sets $B_x$ are called the \emph{bags} of the $T$-partition.
The \emph{width} of a $T$-partition is $\max_{x\in V(T)} |B_x|$.
A \emph{tree-partition} of $G$ is a $T$-partition for some tree $T$.
The \emph{tree-partition-width} of $G$, denoted by $tpw(G)$, is the minimum width of a tree-partition of $G$. 
Tree-partitions were introduced by Seese~\cite{Seese1985} and Halin~\cite{Halin1991}, and have since been widely studied. Applications include nonrepetitive graph colouring~\cite{Wood2008}, clustered graph
colouring~\cite{Alon2003,Liu2018}, size-Ramsey numbers~\cite{Draganic2026,Kamcev2021}, and the Erd\H{o}s--P\'osa property~\cite{Chatzidimitriou2018,Raymond2017}.

Ding and Oporowski~\cite{Ding1995} showed that if the maximum degree and treewidth are both bounded, then so is the tree-partition-width, which is an important result about tree-partitions.

\begin{theorem}[Ding and Oporowski~\cite{Ding1995}]
\label{thm:treewidth}
There is a constant $c$ such that for any non-trivial graph $G$, we have $tpw(G)\le c\bigl(tw(G)+1\bigr)\Delta(G)$.
\end{theorem}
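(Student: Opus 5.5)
We would follow the standard separator-based induction, in the spirit of the short proofs of this result due to Wood, and prove a rooted strengthening that carries a prescribed set into the root bag. Write $w:=tw(G)+1$ and $\Delta:=\Delta(G)\ge 1$, and fix a large absolute constant $c'$. Set $L:=c'w\Delta$. We would prove by induction on $|V(G)|$ the following claim.

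\emph{Claim.} For every $S\subseteq V(G)$ with $|S|\le L$, the graph $G$ has a tree-partition of width at most $L$ in which $S$ is contained in a single bag, the root.

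Taking $S=\emptyset$ gives Theorem~\ref{thm:treewidth}. The claim passes to subgraphs, since treewidth and maximum degree do not increase, so the induction is legitimate. If $|V(G)|\le L$ there is nothing to do. We may also assume $G$ is connected and $S\neq\emptyset$, after adding a single vertex to $S$ if necessary.

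The only structural tool is the weighted separator lemma for treewidth. For any weighting of $V(G)$ and any integer $m\ge 1$, there is a set $X$ with $|X|\le m w$ such that every component of $G-X$ carries at most a $1/(m+1)$ fraction of the total weight. This follows by applying the $\tfrac12$-balanced bag separator repeatedly inside a tree-decomposition, or directly by peeling bags off a rooted tree-decomposition. The inductive step is then as follows.

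\textbf{Step 1 (root bag).} Let $N:=N(S)\setminus S$, so $|N|\le\Delta|S|\le \Delta L$. Apply the separator lemma to $G-S$ with weight supported on $N$ and with $m\approx\Delta$. This gives $X$ with $|X|\le \Delta w$ such that each component of $G-S-X$ meets $N$ in at most about $|S|$ vertices. Put $B_r:=S\cup X$. This bag has size at most $L+\Delta w$, so the constants must be arranged so that this is at most $L$. One way is to run the claim with $|S|\le L/2$ in the hypothesis and to split off a separator for $S$ itself whenever $|S|>L/2$.

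\textbf{Step 2 (children).} For each component $C$ of $G-B_r$, let $S_C:=N(B_r)\cap C$. Apply induction to $G[C]$ with $S_C$, and attach the root of the resulting tree to $r$. Edges from $B_r$ go only into the root bags of the children, so the result is a valid tree-partition.

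\textbf{Main obstacle.} The difficulty is bounding $|S_C|$. The part $N(S)\cap C$ is controlled by Step 1, but $N(X)\cap C$ can have size up to $\Delta|X|\approx\Delta^2 w$, which is too large. To fix this, I would first choose the separator with the weight placed on $N(S)\cup N(N(S))$ near $S$. Failing that, I would use a two-phase argument. First handle $S$ alone, giving children with $S_C=N(S)\cap C$ and $|S_C|\le\Delta|S|$. Then, when a set $S$ is too large (bigger than $L/2$), do not put all of it in the root. Instead choose a $\tfrac12$-balanced separator $X$ for $S$ with $|X|\le w$, put only $X$ (together with at most $L/2$ of the current vertices) into the root, and pass each half of $S$, of size at most $|S|/2+\Delta w$, down to the children. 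The arithmetic $|S|/2+\Delta w\le L/2$ closes for $c'\ge 4$.

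Making these two regimes (small $S$: expand by neighbourhoods; large $S$: halve by separators) alternate consistently, with the invariant $|S|\le L$ maintained, is the heart of the proof and the step I expect to require the most care.
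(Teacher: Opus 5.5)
The paper only cites this theorem (Ding--Oporowski, with the constant later improved by Wood and by Distel et al.), so there is no in-paper proof to compare with. Judged on its own, your proposal has a genuine gap, and it sits exactly at the step you call the heart of the proof.

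Your framework is right. You use a rooted claim with the interface $S$ in the root bag, take the root bag $B_r=S\cup X$, and give each child the interface $S_C=N(B_r)\cap V(C)$. Decoupling the interface bound from the width bound, with $|S|\le L$ but width $L+O(w\Delta)$, is a harmless fix. But you never bound $|N(X)\cap V(C)|$, and both remedies you suggest fail.

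\emph{Reweighting by $N(S)\cup N(N(S))$.} This only controls how many weighted vertices lie in each component of $G-S-X$. It says nothing about how many vertices of $X$ a component is adjacent to. Since $|N(X)\cap V(C)|$ can be as large as $\Delta\,|N(C)\cap X|$, a component adjacent to all of $X$ still gets an interface of order $\Delta^2 w$.

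\emph{The ``large $S$'' regime.} This is not legitimate. Every vertex of $S_C$ has a neighbour in $B_r$, so in any $T$-partition it must lie in a bag equal or adjacent to $r$. Within the subtree built for $C$, the only such bag is its root. Passing halves of $S$ down to children puts them at distance $2$ from the parent bag. That violates the definition of a $T$-partition, and also contradicts your own Claim, so there is nothing to alternate.

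The missing idea is to choose $X$ so that every component of $G-S-X$ sees only $O(w)$ vertices of $X$. One way to do this:
\begin{enumerate}
\item Take a rooted tree-decomposition $(T,(W_t))$ of $G-S$ with bags of size at most $w$. Assign each $v\in N(S)$ to the node closest to the root whose bag contains $v$.
\item Run the greedy argument from the proof of Lemma~\ref{lem:rooted-balance} with these weights. This gives a set $Y_0$ of at most $|N(S)|/(p+1)$ nodes such that each component of $T-Y_0$ carries at most $p$ assigned vertices.
\item Close $Y_0$ under lowest common ancestors, giving $Y$ with $|Y|\le 2|Y_0|$. For an LCA-closed $Y$, every component of $T-Y$ is adjacent to at most two nodes of $Y$: its parent and at most one node below it.
\item Set $X:=\bigcup_{y\in Y}W_y$. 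Each component $C$ of $G-(S\cup X)$ then contains at most $p$ vertices of $N(S)$ and has at most $2w$ neighbours in $X$, so $|S_C|\le p+2w\Delta$.
\end{enumerate}
Now take the interface bound $L:=4w\Delta$ and $p:=2w\Delta$. Then $|Y_0|<2\Delta$, $|X|<4w\Delta$, $|S_C|\le L$, and the width is at most $|S|+|X|< 8w\Delta$, which closes the induction. This closure plays the same role as adding the ancestor paths $xFa_t$ to $B'_t$ in the paper's proof of Theorem~\ref{thm:main}: it bounds how many separator vertices each remaining component can see.
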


Wood~\cite{Wood2009} showed that Theorem~\ref{thm:treewidth} is best possible up to the value of $c$. The upper bound on $c$ has been steadily improved \cite{Ding1995,Wood2009}, most recently to $8$ by Distel et al.~\cite{Distel2026}.

Recently, Wood~\cite{Wood2026} proved that graphs of bounded pathwidth and bounded maximum degree do not have bounded path-partition-width. However, he showed that such graphs admit tree-partitions of bounded width with the additional property that the tree indexing the partition has bounded pathwidth.

\begin{theorem}[Wood~\cite{Wood2026}]\label{thm:pathwidth}
For every graph $G$ of pathwidth $h$ and $\Delta(G)\ge1$, there is a $T$-partition of $G$ with width at most $4(h+1)^2\Delta(G)$ for some tree $T$ with $pw(T)\leq2h+1$.
\end{theorem}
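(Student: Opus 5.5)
The plan is to adapt the Ding--Oporowski "root set" induction behind Theorem~\ref{thm:treewidth}, using the linear structure of a path decomposition to control the shape of the indexing tree. Fix a path decomposition $(P_1,\dots,P_n)$ of $G$ of width $h$, so $|P_i|\le h+1$. I would prove a stronger, rooted statement by induction on $|V(G)|$. For every set $S\subseteq V(G)$ with $|S|\le 2(h+1)^2\Delta(G)$ (the exact threshold to be tuned), there is a $T$-partition of $G$ with the following properties: $S$ is contained in the bag of a root $r$; the width is at most $4(h+1)^2\Delta(G)$; and $T$ has a path decomposition of width at most $2h+1$ with $r$ in its first bag. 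Requiring $r$ to lie in an end bag is what lets subtrees be glued together later. The theorem then follows by taking $S$ to be a single vertex of each component and joining the resulting trees through their roots.

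The inductive step has two cases. If $|S|$ is small (say at most half the threshold), put $S$ into the root bag. The set $N(S)\setminus S$ has size at most $\Delta(G)|S|$. Apply induction to $G-S$ with the new root set $N(S)\setminus S$ in each component, and hang the resulting trees below $r$. Since every edge from $S$ goes to a child bag, this gives a valid partition.

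If $|S|$ is large, use the path decomposition as a separator. Choose an index $i$ such that $P_i$ splits $S$ roughly in half between $P_1\cup\dots\cup P_{i-1}$ and $P_{i+1}\cup\dots\cup P_n$. Then put $S\cup P_i$ (or $S$ together with the relevant part of $P_i$) into the root bag. Every component of $G-(S\cup P_i)$ lies entirely on one side and meets at most about half of $S$ plus $h+1$ separator vertices, so its new root set $N(\cdot)\cap$ component is bounded by the threshold. The arithmetic should give $|S|+(h+1)\le 4(h+1)^2\Delta$ for the width and keep the children's root sets within the inductive hypothesis.

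The main obstacle is the bound $pw(T)\le 2h+1$. Rooted trees of unbounded branching depth can have large pathwidth, so it is not enough to bound bag sizes. I would control this through the left-right structure of the decomposition. Components of $G-(S\cup P_i)$ on the left of $P_i$ live in $P_1,\dots,P_{i-1}$, so their path decompositions are sub-intervals. I would then argue that, along any root-to-leaf path of the recursion, each time the tree branches in an essential way at least one vertex of the current separator bag is consumed. This gives a "Strahler-type" potential bounded by $2(h+1)$, which bounds the pathwidth of $T$ via the standard fact that a rooted tree whose children subtrees all have pathwidth $\le p$, with at most one of them equal to $p$, has pathwidth $\le p$, while otherwise the pathwidth grows by at most $1$. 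I expect this potential argument to require the most care. The factor $2$ in $2h+1$ should come from handling the left and right sides of each separator separately, and I would first try to verify it in the small case $h=1$ (caterpillars) before committing to the general bookkeeping.
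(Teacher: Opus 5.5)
The paper does not prove this statement; it is quoted from Wood. So the proposal can only be judged on its own, and it has two genuine gaps.

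First, the recursion does not close even for the width, because you lose a factor of $\Delta$ whenever you pass from a set to its neighbours. In any tree-partition, a component $H$ of $G-B_r$ lies in a single branch at $r$. So all of $N(B_r)\cap V(H)$ must sit in one bag, and the child's root set must contain it.

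In your small case ($|S|\le(h+1)^2\Delta$) this set can have $(h+1)^2\Delta^2$ vertices. That is above your threshold $2(h+1)^2\Delta$ once $\Delta\ge3$.

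In your large case you conflate ``$H$ is adjacent to at most $|S|/2+h+1$ vertices of $S\cup P_i$'' with ``$H$ has at most that many vertices adjacent to $S\cup P_i$''. The latter count can be $\Delta(|S|/2+h+1)$, which exceeds any threshold $t$ when $|S|$ is near $t$ and $\Delta\ge2$. So no tuning of $t$ saves a single halving separator per step.

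What is needed is a $\Delta$-free invariant, as in the paper's proof of Theorem~\ref{thm:main}:
\begin{itemize}
\item Keep each unfinished part $B$ connected, with $|N_G(B)|$ bounded in terms of $h$ alone. Then the set $S_B$ of vertices of $B$ with a neighbour outside $B$ has at most $\Delta|N_G(B)|$ elements (cf.\ (3.5)).
\item Cut $B$ with at most $|S_B|/(\ell+1)$ bags of the path decomposition, so that at most $\ell$ vertices of $S_B$ lie strictly between consecutive cuts.
\end{itemize}
Each new component then has at most $\ell+2(h+1)$ neighbours outside itself. This restores the invariant with width $O(h\Delta)$.

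Second, and more seriously, the bound $pw(T)\le 2h+1$ is not proved, and this is the part of the theorem that goes beyond Theorem~\ref{thm:treewidth}.

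The branching rule you cite is false for plain pathwidth. A spider with three legs of length $2$, rooted at a neighbour of its centre, has children of pathwidth $1$ and $0$, yet has pathwidth $2$. A rule of this kind holds only for the rooted variant with the root in an end bag, which is what your hypothesis uses. But that variant goes up as soon as two children are tight: two two-vertex paths hung from a new root $r$ form a five-vertex path with no width-$1$ decomposition having $r$ in an end bag.

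Your recursion has unbounded depth and may branch at every level. So you need a quantity that strictly drops at such branchings, and none is supplied. The ``Strahler-type potential'' is never defined. Moreover, all components on one side of $P_i$ inherit the same range of the decomposition, so nothing is visibly consumed when you branch into them.

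A mechanism suited to this setting is to read a path decomposition of $T$ off that of $G$. Give each node $x$ the set $I_x$ of positions $j$ with $B_x\cap P_j\neq\emptyset$, and let $W_j:=\{x: j\in I_x\}$. If all bags are connected and tree-adjacent bags are joined by an edge of $G$, then each $I_x$ is an interval, $(W_j)$ is a path decomposition of $T$, and $|W_j|\le|P_j|\le h+1$.

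Connected bags are impossible in general: for the cycle $C_n$ they force width at least $n/2$. So the real task is to design the construction so that bags are nearly connected and every position is spanned by at most $2(h+1)$ bags, which gives exactly $pw(T)\le 2h+1$. Your root bags $S\cup P_i$ have uncontrolled spans, so the proposal as written gives no handle on this.
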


Tree-depth is a stronger parameter than both treewidth and pathwidth. It is natural to ask whether a graph of bounded tree-depth and bounded maximum degree admits a bounded-width tree-partition whose underlying tree also has bounded tree-depth. Our main theorem answers this question affirmatively.

To state our result, we need one more definition. For a connected graph $G$, the {\it radius} of $G$, denoted by $\operatorname{rad}(G)$, is equal to 
$\min\limits_{v\in V(G)}\max\limits_{u\in V(G)} d_G(u,v)$, where $d_G(u,v)$ denotes the distance between $u$ and $v$ in $G$.

\begin{theorem}\label{thm:main}
For any connected graph $G$ with tree-depth $h$, there is a $T$-partition of $G$ with width at most $\mathrm{max}\{1, (4h-10)\Delta(G)+1\}$ for some tree $T$ with $\operatorname{rad}(T)\leq h-1$.
\end{theorem}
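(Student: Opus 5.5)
We prove Theorem~\ref{thm:main} by induction on the tree-depth $h$, using the recursive characterisation of tree-depth. A connected graph $G$ with $td(G)=h$ has a vertex $r$ such that every component of $G-r$ has tree-depth at most $h-1$. The cases $h\le 2$ are the base. If $h=1$, then $G=K_1$ and a single bag of width $1$ works. If $h=2$, then $G$ is a star. Putting the centre in one bag and all leaves in an adjacent bag gives width at most $\Delta(G)$, so we should check that this is at most $\max\{1,(4h-10)\Delta(G)+1\}$. For $h=2$ that bound is $\max\{1,1-2\Delta\}=1$, which a star does not meet. The formula only makes sense once $4h-10\ge 1$, i.e.\ $h\ge 3$; for $h\le 2$ the statement as written is suspicious, and I would begin by checking small cases. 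So I will in fact prove a stronger, layered statement in which the root vertex sits alone in the central bag, and treat small $h$ by direct constructions: a star has a $T$-partition with the centre alone in one bag and each leaf in its own bag adjacent to it, which has width $1$ and $\operatorname{rad}(T)=1$.

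\textbf{Strengthened hypothesis.} For connected $G$ with $td(G)\le h$ and root $r$ of an elimination tree, there is a $T$-partition in which $\{r\}$ is the bag of the centre $c$ of $T$, every vertex of $T$ is within distance $h-1$ of $c$, and every bag has size at most $(4h-10)\Delta+1$ (or $1$ for small $h$).

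\textbf{Construction.} Use breadth-first layering from the root, grouped by components. Let $N(r)$ meet components $C_1,\dots,C_k$ of $G-r$, with $k\le\Delta$. A naive attempt places $N(r)\cap C_i$ in a bag adjacent to $\{r\}$ and recurses inside $C_i$. The recursion, however, wants the root $r_i$ of $C_i$ to be alone in the centre bag, whereas here the attachment set $N(r)\cap C_i$ is the bag next to $r$.

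The remedy is to strengthen the hypothesis to: a given set $S$ of at most $a$ vertices forms the central bag. Then, inside $C_i$, we take $S_i=N(r)\cap C_i$ together with the elimination root $r_i$ of $C_i$. Deleting $r_i$ splits $C_i$ into pieces of depth at most $h-2$. Each piece receives a bag, adjacent to the bag of $S_i\cup\{r_i\}$, consisting of the neighbours of $S_i\cup\{r_i\}$ in that piece, and so on recursively. Each bag is therefore the neighbourhood of a previous bag intersected with a component. Since a bag is a neighbourhood, its size is at most $\Delta$ times the previous bag, which would be exponential. To keep it linear, the key point is that within a component of depth $d$ we delete the elimination root, adding $1$ vertex to the bag, while the inherited part shrinks because the neighbours of the previous bag split among the components. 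This needs amortisation: the inherited set lies in a component, and each recursion level contributes at most $4\Delta$ new vertices, namely neighbours of the elimination root and of a constant number of previous vertices. That is how the linear bound $(4h-10)\Delta+1$ arises, with the count ending at $2$ for the final levels.

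\textbf{Main obstacle.} The main obstacle is controlling bag sizes so they grow additively, by $O(\Delta)$ per depth level, rather than multiplicatively. I would first try the invariant ``the central bag $S$ has size at most $(4d-c)\Delta+1$, and the vertices of $S$ adjacent outside the subgraph lie in $O(1)$ earlier bags.'' The radius bound $h-1$ follows because each recursion decreases depth by one and adds one tree level.
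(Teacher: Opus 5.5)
There is a genuine gap: you never prove the width bound, which is the whole content of the theorem, and the construction you describe does not satisfy it. Your radius bound and small cases are fine. The statement is not suspicious for $h\le 2$: as you note yourself, a star has a width-$1$ partition with each leaf in its own bag.

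\textbf{Why your construction fails.} The bag you give to a piece $K$ is $(N_G(B)\cap V(K))\cup\{\text{root of }K\}$, where $B$ is the previous bag. Its size is therefore about $\Delta\,|N_G(V(K))|$. Nothing in your scheme bounds $|N_G(V(K))|$, the number of vertices of the previous bag adjacent to a single piece, independently of $\Delta$. So your claim that only ``a constant number of previous vertices'' contribute is false. Your proposed invariant (boundary vertices lie in $O(1)$ earlier bags) does not help, since they may all lie in one bag.

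Here is a concrete example. For even $\Delta\ge 30$, take vertices $r,z_1,z_2$, $s_i,y_i$ $(1\le i\le \Delta)$ and $q_{i,k}$ $(1\le k\le \Delta-1)$. The edges are $rs_i$, $s_iq_{i,k}$, $y_iq_{i,k}$, $z_1z_2$, $z_1y_i$ for $i\le \Delta/2$, and $z_2y_i$ for $i>\Delta/2$.
\begin{itemize}
\item This graph has maximum degree $\Delta$ and tree-depth $6$. An elimination tree of height $6$ is $r\to z_1\to z_2\to s_i\to y_i\to q_{i,k}$. For the lower bound, deleting any two vertices leaves two intact gadgets joined through a hub, which contains a $P_9$.
\item Since $td(G-r)=5$ and $td(G-r-z_1)=4$, choosing $r$ and then $z_1$ as roots is legitimate in your scheme.
\item Your bag next to $\{r\}$ is $\{z_1,s_1,\dots,s_\Delta\}$.
\item The piece containing $z_2$ then receives the bag $\{z_2\}\cup\{q_{i,k}:i>\Delta/2\}$. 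Its size is $1+\Delta(\Delta-1)/2>14\Delta+1=(4h-10)\Delta+1$.
\end{itemize}

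\textbf{The missing idea.} You need to keep the boundary of every piece still to be refined bounded by a function of $h$ alone. The paper maintains $|N_G(B_t)|\le 2h-5$, which gives $|S_t|\le(2h-5)\Delta$ for the set $S_t$ of vertices of $B_t$ with neighbours outside $B_t$. Your bag is exactly $\{a_t\}\cup S_t$, and that alone does not preserve the invariant.

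The paper additionally does the following:
\begin{itemize}
\item It applies Lemma~\ref{lem:rooted-balance} to $F_{a_t}$, $S_t$ and $\ell=h-3$. This yields $X_t$ with $|X_t|\le |S_t|/(h-2)$ such that every component of $F_{a_t}\setminus X_t$ contains at most $h-3$ vertices of $S_t$.
\item It also puts into the bag the $F$-paths from each $x\in X_t$ up to $a_t$. In the example above, this adds $z_2$, which is exactly what breaks up the bad piece.
\end{itemize}

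Each new piece that still needs refining has a top vertex $a_i$ that is not a leaf of $F$. Its boundary then lies among the at most $h-2$ strict ancestors of $a_i$ and the at most $h-3$ vertices of $S_t$ in the component of $F_{a_t}\setminus X_t$ containing $F_{a_i}$. So the invariant persists. Meanwhile the bag has size at most $1+|S_t|+(h-2)|X_t|\le 1+2|S_t|\le (4h-10)\Delta+1$.
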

Tree-depth alone does not bound tree-partition-width, even for graphs of tree-depth $3$, as shown by the lemma at the end of the paper.

\section{Preliminaries}\label{sec:preliminaries}
For a graph $G$, let $V(G)$ and $E(G)$ denote its vertex and edge sets, respectively. A graph is \emph{non-trivial} if it has at least one edge. For a vertex $v$ in $V(G)$, its \emph{neighbourhood} in $G$ is $N_G(v):=\{w\in V(G): vw\in E(G)\}$. For a set $X\subseteq V(G)$, its \emph{neighbourhood} in $G$ is $N_G(X):=\bigcup\limits_{v\in X}N_G(v)\setminus X$.
Let $d_G(v):=|N_G(v)|$ denote the {\it degree} of $v$ in $G$ and let $\Delta(G)$ denote the \emph{maximum degree} of $G$. 

A \emph{tree-decomposition} of a graph $G$ is a pair $(T,\mathcal B)$, where $T$ is a tree and
$\mathcal B=(B_t:t\in V(T))$ is a family of subsets of $V(G)$ such that:
\begin{itemize}
  \item for every edge $uv\in E(G)$, some $t\in V(T)$ satisfies
        $\{u,v\}\subseteq B_t$; and
  \item for every vertex $v\in V(G)$, the set
        $\{t\in V(T):v\in B_t\}$ induces a non-empty subtree
        of $T$.
\end{itemize}
Its width is $\max_{t\in V(T)}|B_t|-1$. The \emph{treewidth} of $G$, denoted by $tw(G)$, is the minimum width of a tree-decomposition of $G$. If the decomposition tree $T$ is a path, the corresponding parameter is the \emph{pathwidth} of $G$, denoted by $pw(G)$.

Let $F$ be a rooted tree with root $r$. For any $s,t\in V(F)$, the vertex $t$ is an \emph{ancestor} of $s$ (and $s$ is a \emph{descendant} of $t$) in $F$ if $t$ lies on the path from $r$ to $s$. Moreover, if  $st\in E(F)$, then $t$ is the \emph{parent} of $s$ and $s$ is a \emph{child} of $t$. 
An ancestor or descendant of a vertex is \emph{strict} if it is distinct from the vertex itself.
Let $Anc_F(t)$ denote the set of strict ancestors of $t$ in $F$, and let $F_t$ denote the subtree of $F$ induced by $t$ and all its descendants. The \emph{depth} of $t$ in $F$ is the number of vertices on the path from $r$ to $t$. The \emph{height} of $F$ is the maximum depth of a vertex in $F$.
The \emph{closure} of $F$, denoted by $clos(F)$, is the graph on $V(F)$ in which two vertices are adjacent whenever one is a strict ancestor of the other in $F$. The \emph{tree-depth} of a graph $G$, denoted by $td(G)$, is the minimum height of a rooted forest $F$ such that $G\subseteq clos(F)$.

The following result is obvious, which will be frequently used in this paper without reference.

\begin{observation}
Let $F$ be a rooted tree. 
For any connected graph $H$ with $H\subseteq clos(F)$, there is a unique vertex $a\in V(H)$ satisfying $V(H)\subseteq V(F_a)$.
\end{observation}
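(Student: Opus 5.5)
We prove existence by choosing a vertex of $H$ of minimum depth in $F$ and showing that $H$ cannot leave its subtree, since every edge of $H$ joins an ancestor–descendant pair. Uniqueness then follows from antisymmetry of the ancestor relation. We assume $H$ is non-empty, as is implicit in the statement.

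\textbf{Existence.} Let $a\in V(H)$ be a vertex whose depth in $F$ is minimum among all vertices of $H$. We claim $V(H)\subseteq V(F_a)$. Let $v\in V(H)$. Since $H$ is connected, there is a path $a=u_0,u_1,\dots,u_k=v$ in $H$. We show by induction on $i$ that $u_i\in V(F_a)$; the case $i=0$ is trivial.

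Suppose $u_i\in V(F_a)$. Because $u_iu_{i+1}\in E(H)\subseteq E(clos(F))$, one of $u_i,u_{i+1}$ is a strict ancestor of the other.
\begin{itemize}
\item If $u_{i+1}$ is a descendant of $u_i$, then it is a descendant of $a$, so $u_{i+1}\in V(F_a)$.
\item Otherwise $u_{i+1}$ lies on the path from the root $r$ to $u_i$. Since $u_i\in V(F_a)$, this path passes through $a$. By the minimality of the depth of $a$, the vertex $u_{i+1}$ has depth at least that of $a$. Hence $u_{i+1}$ lies on the subpath from $a$ to $u_i$, so again $u_{i+1}\in V(F_a)$.
\end{itemize}
This completes the induction, and taking $i=k$ gives $v\in V(F_a)$.

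\textbf{Uniqueness.} Suppose $a,b\in V(H)$ both satisfy $V(H)\subseteq V(F_a)$ and $V(H)\subseteq V(F_b)$. Then $b\in V(F_a)$ and $a\in V(F_b)$, so each of $a,b$ is an ancestor of the other. Since the ancestor relation in a rooted tree is antisymmetric, $a=b$.

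The only step needing care is the ancestor case of the induction. There one must combine two facts: the root-to-$u_i$ path goes through $a$, and no vertex of $H$ is shallower than $a$. The rest is routine.
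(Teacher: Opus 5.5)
Your proof is correct and follows the argument the paper leaves implicit: the paper calls the observation obvious and gives no proof, but it later applies it in exactly your form, taking $a_i$ to be the minimum-depth vertex of $C_i$ and concluding $V(C_i)\subseteq V(F_{a_i})$. Your induction along a path in $H$, together with uniqueness via antisymmetry of the ancestor relation, fills in that reasoning with nothing missing.
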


\begin{lemma}\label{lem:rooted-balance}
Let $F$ be a rooted tree with $S\subseteq V(F)$. For any integer $\ell\geq0$, there exists a set $X\subseteq V(F)$ such that
\begin{enumerate}[label=\textup{(\roman*)}]
  \item $ |X|\le\frac{|S|}{\ell+1}$;
  \item every component of $F\backslash X$ contains at most $\ell$ vertices of $S$; and
  \item $|S\cap V(F_x)|\ge \ell+1$ for every $x\in X$.
\end{enumerate}
\end{lemma}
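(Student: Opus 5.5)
We construct $X$ by a greedy bottom-up procedure on $F$ and then check the three properties; the only real work is a charging argument for (i).

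For $v\in V(F)$ and a current set $X$, let $w_X(v)$ denote the number of vertices of $S$ in the component of $F_v\setminus (X\setminus\{v\})$ containing $v$. Equivalently, it counts the vertices of $S$ in $F_v$ reachable from $v$ without passing through a vertex of $X$ below $v$. We process the vertices of $F$ in non-increasing order of depth, so that all children are treated before their parent. Start with $X=\emptyset$. When we reach $v$, we add $v$ to $X$ exactly when $w_X(v)\ge \ell+1$. Note that $w_X(v)$ depends only on the choices already made strictly below $v$.

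Properties (iii) and (ii) follow directly.

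\emph{Property (iii).} If $x\in X$, then $|S\cap V(F_x)|\ge w_X(x)\ge \ell+1$.

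\emph{Property (ii).} Let $C$ be a component of $F\setminus X$. Since $F$ is a tree, $C$ has a unique vertex $c$ of minimum depth, and $C$ is the component of $F_c$ minus the final $X$ that contains $c$. Every vertex of $X$ in $F_c$ lies strictly below $c$, and all of these were decided before $c$ was processed. Hence $|S\cap V(C)|=w_X(c)$ computed at the moment $c$ was processed. Because $c\notin X$, this value is at most $\ell$.

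\emph{Property (i).} This is the main step, and we prove it by charging. For each $x\in X$, let $D_x$ be the component containing $x$ of $F_x$ minus the vertices of $X$ lying strictly below $x$, evaluated when $x$ was processed. By construction $|S\cap D_x|\ge \ell+1$. We claim the sets $D_x$, $x\in X$, are pairwise disjoint. Take $x\ne y$ in $X$. If neither is an ancestor of the other, then $F_x$ and $F_y$ are disjoint, so $D_x\cap D_y=\emptyset$. Otherwise suppose $y$ is a strict descendant of $x$. Then $D_x$ avoids $y$, since $y\in X$ was already present when $x$ was processed, so $D_x$ contains no vertex of $F_y$. As $D_y\subseteq F_y$, the sets are disjoint. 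Therefore
\[(\ell+1)|X|\le \sum_{x\in X}|S\cap D_x|\le |S|,\]
which gives (i). The case $\ell=0$ is covered by the same argument: $X$ is then the set of vertices whose current component contains a vertex of $S$, every component of $F\setminus X$ is free of $S$, and $|X|\le|S|$.
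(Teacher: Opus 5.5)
Your proof is correct and is essentially the paper's argument written iteratively. The paper inducts on $|S|$: it takes a deepest vertex $v$ with $|S\cap V(F_v)|\ge \ell+1$, puts it in $X$, deletes $F_v$ and recurses, which produces the same bottom-up greedy set as yours, and your disjoint sets $D_x$ are exactly the subtrees deleted at the successive steps; the only difference is that you obtain (i) by explicit charging over the disjoint $D_x$ rather than via the inductive inequality $|X'|+1\le \frac{|S|-(\ell+1)}{\ell+1}+1$.
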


\begin{proof}
We proceed by induction on $|S|$. When $|S|\leq \ell$, set $X:=\emptyset$ and (i)-(iii) hold. Now suppose that $|S|\geq \ell+1$. Choose a vertex $v$ of greatest depth in $F$ such that $|S\cap V(F_v)|\geq \ell+1$.
By the choice of $v$, every child $w$ of $v$ satisfies $|S\cap V(F_w)|\leq \ell$.
When $v$ is the root of $F$, set $X:=\{v\}$ and (i)-(iii) hold. 
So we may assume that $v$ is not the root of $F$. Set $F':=F\backslash V(F_v)$ and $S':=S\cap V(F')$. Then $|S'|\le |S|-(\ell+1)$ as  $|S\cap V(F_v)|\ge \ell+1$.
By induction, there is a set $X'\subseteq V(F')$ such that (i)-(iii) are true for $F', X'$ and $S'$. Set $X:=X'\cup\{v\}$.
Then (ii) and (iii) obviously hold, and $|X| = |X'|+1 \le \frac{|S'|}{\ell+1}+1 \le \frac{|S|-(\ell+1)}{\ell+1}+1 = \frac{|S|}{\ell+1}$. This proves the lemma.
\end{proof}

\section{Proof of Theorem~\ref{thm:main}}\label{main}
In this section, we prove Theorem~\ref{thm:main}. For convenience, we restate it here.
\setcounter{theorem}{2}
\renewcommand{\thetheorem}{1.\arabic{theorem}}

\begin{theorem}\label{prop:large-h}
For any connected graph $G$ with tree-depth $h$, there is a $T$-partition of $G$ with width at most $\mathrm{max}\{1, (4h-10)\Delta(G)+1\}$ for some tree $T$ with $\operatorname{rad}(T)\leq h-1$.
\end{theorem}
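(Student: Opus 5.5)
Our plan is to fix a rooted tree $F$ of height $h$ with $G\subseteq clos(F)$ and $V(F)=V(G)$; since $G$ is connected, $F$ is a single tree with root $r$. We then induct on $h$, proving a slightly stronger statement. The stronger statement keeps a prescribed small ``root bag'', so that the pieces obtained recursively can be glued at the centre of $T$. The cases $h\le 2$ are handled directly. For $h=1$, $G$ is a single vertex, which gives width $1$. For $h=2$, $G$ is a star centred at $r$. We take $T$ to be a single edge, with bags $\{r\}$ and $V(G)\setminus\{r\}$; the second bag has size $\Delta(G)\le \max\{1,(4\cdot2-10)\Delta+1\}$ only when $\Delta\le 1$. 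So we will instead use a star $T$ of radius $1$ with bag $\{r\}$ at the centre and one leaf per remaining vertex. Then $\operatorname{rad}(T)\le 1=h-1$ and the width is $1$. For general $h$ the idea is the same: the centre bag carries the top of $F$, and each subtree hangs off it with radius one less.

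The main step goes as follows. Let $c_1,\dots,c_k$ be the children of $r$. The graph $G-r$ splits into components $G_j$, each contained in $clos(F_{c_j})$ and each of tree-depth at most $h-1$. Applying the induction naively to each $G_j$ fails, because the neighbours of $r$ can lie anywhere inside $G_j$, not only in the central bag of its partition. We therefore strengthen the hypothesis. For a connected $G'$ with $td(G')\le h'$ and a set $S\subseteq V(G')$ of ``attachment'' vertices, we seek a $T'$-partition with $\operatorname{rad}(T')\le h'-1$, centred at a node $z$, whose central bag contains $S$ and whose width is at most $|S|+(4h'-c)\Delta$. To build it, we put the root of $F'$ together with $S$ into the central bag. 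We then apply Lemma~\ref{lem:rooted-balance} to $F'$ with the set $S$ and $\ell=\Delta$ (or a multiple of it), obtaining a small set $X$ of vertices to promote. The promoted vertices $X$ also go into the central bag. Each component of $F'\setminus(X\cup\{\text{root}\})$ then contains at most $\ell$ attachment vertices, and it is recursed on with the new attachment set equal to its neighbours in the central bag.

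The accounting runs as follows. Condition (iii) of Lemma~\ref{lem:rooted-balance} ensures that each promoted vertex $x$ is ``paid for'' by at least $\ell+1$ attachments below it. A component of $G'$ minus the central bag has at most $\Delta\cdot(\text{number of its neighbours in the central bag})$ new attachment vertices. Because every component lies in a subtree of $F'$ rooted at depth at least $2$, the recursion depth drops by one at each level, which gives $\operatorname{rad}(T)\le h-1$. The linear term $4h-10$ should come from summing, over the $h-2$ nontrivial levels, a constant number (about $4$) of $\Delta$-blocks per level. The $+1$ accounts for the root.

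We expect the main obstacle to be controlling the width of the central bag. Attachments from the previous level can number up to $\Delta$ times the parent bag size, which threatens exponential growth. The fix we plan to try first is to keep in each child's central bag only the neighbours of the single ancestor chain (the strict ancestors in $F$, which number at most $h$). We would then use Lemma~\ref{lem:rooted-balance} with $\ell=\Delta$ so that the number of promoted vertices stays below $|S|/(\Delta+1)$, and verify by induction the invariant $|B_z|\le (4h'-10)\Delta+1$. This invariant check, particularly the constant $-10$, is where we expect most of the effort to lie.
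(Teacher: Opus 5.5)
There is a genuine gap. Your skeleton matches the paper's: the central bag holds the top vertex of the piece in $F$, the attachment vertices, and vertices chosen by Lemma~\ref{lem:rooted-balance}, and the radius bound comes from the top vertex moving strictly down $F$ at each level. But two of your specific choices break the argument. The first is that promoting only $X$ does not separate the remaining pieces. In $clos(F)$ a vertex above some $x\in X$ may be adjacent to a vertex below $x$. So different components of $F'\setminus(X\cup\{\text{root}\})$ can be joined by edges of $G$, and hanging them as different children of $z$ violates the $T$-partition condition. If you instead take components of $G'$ minus the central bag, one component can jump over $x$ and meet far more than $\ell$ attachment vertices. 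The paper promotes the entire path $xFa_t$ from each $x\in X_t$ up to the top vertex, i.e.\ $B'_t=\{a_t\}\cup S_t\cup(B_t\cap\widehat X_t)$. Then for a component $C_i$ of $G[B_t-B'_t]$ with top vertex $a_i$, the subtree $F_{a_i}$ avoids $X_t$; otherwise $a_i$ would lie on a promoted path and hence be in $B'_t$. Consequently $N_G(V(C_i))\subseteq\operatorname{Anc}_F(a_i)\cup(S_t\cap V(F_{a_i}))$, which has size at most $(h-2)+\ell$. Each promoted path costs at most $h-2$ extra vertices, which gives the term $(h-2)|X_t|\le (h-2)|S_t|/(\ell+1)$ in the width.

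The second problem is that $\ell$ must not scale with $\Delta$. The right invariant is a bound on the number of outside neighbours of a piece, $|N_G(B_t)|\le h-2+\ell$. This is multiplied by $\Delta$ exactly once to give $|S_t|\le(h-2+\ell)\Delta$. With $\ell=\Delta$, a child piece can have $h-2+\Delta$ neighbours in the central bag, hence up to $\Delta(h-2+\Delta)$ attachment vertices, so the width becomes quadratic in $\Delta$. Your proposed fix of keeping only neighbours of the ancestor chain is not available. Every vertex of the child adjacent to \emph{any} central-bag vertex must lie in the child's central bag, and that includes neighbours of parent attachment vertices below $a_i$. The paper balances the two costs by taking $\ell=h-3$, so that $|B'_t|\le 1+\bigl(1+\tfrac{h-2}{\ell+1}\bigr)(h-2+\ell)\Delta=1+2(2h-5)\Delta$. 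The constant $4h-10$ is thus a per-bag bound $2(2h-5)$, not a sum of $\Delta$-blocks over levels.
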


\begin{proof}
Set $\Delta:=\Delta(G)$. 
When $h\leq 2$, since $G$ is a star or an isolated vertex, $tpw(G)=1$ and the theorem obviously holds. So we may assume that $h\geq3$.
When $|V(G)|\leq (4h-10)\Delta+1$, the theorem also obviously holds. So we may assume that $|V(G)|> (4h-10)\Delta+1$.


Let $F$ be a rooted tree of height $h$ with $V(F)=V(G)$ and $G\subseteq\operatorname{clos}(F)$. Let $\ell\geq0$ be an integer.
We will recursively construct a 
$T$-partition $(B_t: t\in V(T))$ of $G$ for some tree $T$ such that when $|B_t|> (4h-10)\Delta+1$, the following hold.
   \begin{enumerate}[label=\textup{(\roman*)}]
   \item $G[B_t]$ is connected,
  \item $d_T(t)\leq 1$, and 
  \item $|N_G(B_t)|\le h-2+\ell$.
  \end{enumerate}
\noindent 
Let $ L$ be the set consisting of the vertices of $T$ whose indexed bags have size larger than $(4h-10)\Delta+1$. Evidently, when $ L=\emptyset$, we get a tree-partition of $G$ of width at most $(4h-10)\Delta+1$. Hence, it suffices to construct a $T$-partition  of $G$ with $ L=\emptyset$ and $\mathrm{rad}(T)\leq h-1$.

Initially, let $T$ be a tree with $r$ as the unique vertex and set $B_{r}:=V(G)$. 
Since $G$ is connected, (i)--(iii) holds for the initial tree-partition of $G$ and $L=\{r\}$.

When $ L\neq\emptyset$, arbitrarily choose $t\in L$. Since $G[B_t]$ is connected by (i), there is a unique vertex $a_t\in B_t$ with $B_t\subseteq V(F_{a_t})$. 
Set $$S_t:=\{u\in B_t:N_G(u)-B_t\neq\emptyset\}. \eqno{(3.1)}$$ 
Applying Lemma~\ref{lem:rooted-balance} to $F_{a_t}$, $S_t$, and $\ell$, there is a set $X_t\subseteq V(F_{a_t})$ with
\[ |X_t|\leq\frac{|S_t|}{\ell+1},\ \ |S_t\cap V(F_x)| \ge \ell+1,\ \ |S_t\cap V(C)| \le \ell, \eqno{(3.2)}\]
for every $x\in X_t$ and each component $C$ of $F_{a_t}\backslash X_t$. Evidently, when $t=r$, the vertex $a_t$ is the root of $F$ and $S_t=X_t=\emptyset$. Since $|S_t\cap V(F_x)| \ge \ell+1$ for every $x\in X_t$ and $\ell\geq0$,

\ \ \  \  {\bf (a)} for each $x\in X_t$, either $x\in S_t$ and $\ell=0$ or $x$ is not a leaf of $F$.

\noindent Set $$\widehat X_t:=\bigcup_{x\in X_t}V(xFa_t),\ \ B'_t:=\{a_t\}\cup S_t\cup(B_t\cap\widehat X_t), \eqno{(3.3)}$$
where $xFa_t$ denotes the unique $(x, a_t)$-path in $F$.
Since $\{a_t\}\cup S_t\subseteq B_t$ by (3.1), we have $B'_t\subseteq B_t$. Note that $a_t\in B_t\cap\widehat X_t$ when $X_t\neq\emptyset$; and $ B'_t=\{a_t\}$ when $t=r$. 

\begin{claim}\label{size}
When $t\neq r$, we have $|B'_t|\le (4h-10)\Delta+1$ if we choose $\ell=h-3$.
\end{claim}
\begin{proof}
On one hand, for any $x\in X_t$, by (a), we have $|V(xFa_t)-\{a_t\}-S_t|\leq h-2$. Moreover, since $a_t\in B_t\cap\widehat X_t$ when $X_t\neq\emptyset$, we have 
$$|B'_t|\le 1+|S_t|+(h-2)|X_t|\le 1+\left(1+\frac{h-2}{\ell+1}\right)|S_t|, \eqno{(3.4)}$$
by (3.2) and (3.3). On the other hand, since $|N_G(B_t)|\le h-2+\ell$ by (iii), it follows from (3.1) that
$$|S_t|\leq\sum_{v\in N_G(B_t)}d_G(v)\leq\Delta|N_G(B_t)|\leq(h-2+\ell)\Delta. \eqno{(3.5)}$$Combining (3.4) and (3.5), we have
$|B'_t|\leq1+f_h(\ell)\Delta$, where $f_h(\ell)=(h-2+\ell)\left(1+\frac{h-2}{\ell+1}\right)$.
Since $f_h(\ell)-(4h-10)=\frac{(\ell-h+4)(\ell-h+3)}{\ell+1}$ and $\ell-h+4$ and $\ell-h+3$ are consecutive integers, $f_h(\ell)\geq 4h-10$ and the equality holds when $\ell\in\{h-3,h-4\}$. Hence, $|B'_t|\le (4h-10)\Delta+1$ when $\ell=h-3$.
\end{proof}

By Claim \ref{size}, now we fix $\ell=h-3$.
Let $C_1,\dots,C_m$ be the components of $G[B_t-B'_t]$.
\begin{claim}\label{neighbour}
For any $1\leq i\leq m$, we have $N_G(V(C_i))\subseteq B'_t$.
\end{claim}
\begin{proof}
Since $S_t\subseteq B'_t$ implies $V(C_i)\cap S_t=\emptyset$, we have $N_G(V(C_i))\subseteq B'_t$ by (3.1) and the fact $B'_t\subseteq B_t$. 
\end{proof}

Assume that $|V(C_i)|\geq (4h-10)\Delta+1$ for some $1\leq i\leq m$.
Let $a_i$ be the vertex in $V(C_i)$ having minimum depth in $F$. Then $V(C_i)\subseteq V(F_{a_i})$ and the following claim holds.
\begin{claim}\label{clm:active-boundary}
The vertex $a_t$ is a strict ancestor of $a_i$ in $F$ and $|N_G(V(C_i))|\le h-2+\ell$.
\end{claim}
\begin{proof}
Since $C_i$ is a component of $G[B_t-B'_t]$ and $a_t\in B'_t$, the vertex $a_t$ is a strict ancestor of $a_i$ in $F$ by the definitions of $a_t$ and $a_i$. On one hand, since $|V(C_i)|\geq (4h-10)\Delta+1$ implies that $a_i$ is not a leaf of $F$, we have $|\operatorname{Anc}_F(a_i)|\leq h-2$. 
On the other hand, since every vertex in $N_G(V(C_i))$ is either a strict ancestor of $a_i$ or belongs to $V(F_{a_i})$, combined with Claim \ref{neighbour}, we have $N_G(V(C_i))\subseteq \operatorname{Anc}_F(a_i)\cup(B'_t\cap V(F_{a_i}))$. Moreover, since $a_t$ is a strict ancestor of $a_i$ in $F$ and $F_{a_i}$ is contained in a component $C$ of $F_{a_t}\backslash{X_t}$, it follows from the fact  $B'_t=\{a_t\}\cup S_t\cup(B_t\cap\widehat X_t)$ that $ B'_t\cap V(F_{a_i})=S_t\cap V(F_{a_i})\subseteq S_t\cap V(C)$, so $|N_G(V(C_i))| \le |\operatorname{Anc}_F(a_i)|+|B'_t\cap V(F_{a_i})|\le h-2+\ell$ as $|S_t\cap V(C)|\le\ell$ by (3.2).
\end{proof}

Now we update $T$ to a new tree, which is still denoted by $T$, by adding a new vertex $t_i$ adjacent only to  $t$ for each $1\leq i\leq m$. Set $B_{t_i}:=V(C_i)$ and update the bag indexed by $t$ with $B'_t$. Then we get a new $T$-partition of $G$ by Claim \ref{neighbour} and the fact that $(B'_t, B_{t_1},\ldots, B_{t_m})$ is a partition of $B_t$. Moreover, since all $C_i$ are connected, this new tree-partition satisfies (i)-(iii) by Claims \ref{size} and \ref{clm:active-boundary}. Since $a_t\in B'_t\subseteq B_t$, each $V(C_i)$ is a proper subset of $B_t$, so the process will be finished and we will get a $T$-partition of $G$ with width at most $(4h-10)\Delta+1$.  Moreover, since $a_t$ is a strict ancestor of $a_i$ in $F$ by Claim \ref{clm:active-boundary}, $d_T(r,s)\leq h-1$ for any $s\in V(T)$, so $\mathrm{rad}(T)\leq h-1$.
\end{proof}



We now show that the dependence on $\Delta(G)$ in Theorem~\ref{thm:main} is necessary, even for graphs of tree-depth $3$.
\setcounter{theorem}{0}
\renewcommand{\thetheorem}{3.\arabic{theorem}}
\begin{lemma}\label{thm:degree-necessary}
For every integer $n\ge 1$, there exists a graph $G_n$ with $td(G_n)=3$ and with $tpw(G_n)\ge \frac{n}{2}$.
\end{lemma}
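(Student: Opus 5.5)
We will take $G_n$ to be the \emph{fan}: a path $P=p_1p_2\cdots p_N$ on $N=n^2$ (or some suitable polynomial in $n$) vertices, together with one extra vertex $z$ adjacent to every vertex of $P$. The first step is to bound the tree-depth. Since $td(P_N)\le \lceil\log_2(N+1)\rceil$, a fan over a long path has large tree-depth, so this naive choice fails. Instead we replace the path by a graph of tree-depth $2$ that still forces large bags: a star $K_{1,n}$ with centre $c$ and leaves $v_1,\dots,v_n$. Let $G_n$ be $K_{1,n}$ plus a vertex $z$ adjacent to every vertex of the star, i.e. $K_2\vee \overline{K_n}$ with vertex set $\{z,c,v_1,\dots,v_n\}$, where $z,c$ are adjacent to each other and to every $v_j$. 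Rooting the path $z$--$c$ and hanging every $v_j$ below $c$ gives a forest of height $3$ whose closure contains $G_n$, so $td(G_n)\le3$. It contains a triangle $z c v_1$, and any rooted forest of height $2$ has a closure that is a disjoint union of stars, which is triangle-free; hence $td(G_n)=3$.

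Next we show $tpw(G_n)\ge n/2$. Take any $T$-partition $(B_x:x\in V(T))$ and suppose $z\in B_x$ and $c\in B_y$. Every $v_j$ is adjacent to both $z$ and $c$, so its bag must equal or be adjacent to both $x$ and $y$. If $x=y$, the $v_j$ lie in $B_x$ or in neighbours of $x$. Two distinct neighbours of $x$ are not adjacent in a tree, and the $v_j$ are pairwise nonadjacent, so this gives no immediate contradiction. This star-based graph therefore does not force large width: putting $z,c$ in one bag and each $v_j$ in its own leaf bag gives width $2$. The construction has to be strengthened.

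The fix is to let the leaves form a connected structure relative to $z$, while keeping the tree-depth at $3$. Take $z$ adjacent to all vertices of a graph $H$ with $td(H)=2$ in which the edges force many vertices into few bags. Since $td(H)=2$ forces $H$ to be a star forest, we instead use two apex levels. Let $G_n$ consist of a root $z$, children $c_1,\dots,c_n$, and under each $c_i$ leaves $w_{i,1},\dots,w_{i,n}$. Edges run from $z$ to every vertex and from each $c_i$ to its leaves, plus cross edges $w_{i,j}c_{j}$ if needed. Cross edges between different branches violate the closure, so within closure constraints each $w_{i,j}$ may be adjacent only to $z$ and $c_i$.

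The key argument, and the main obstacle, is the bag analysis. Let $z\in B_x$. Every other vertex lies in $B_x$ or in a bag adjacent to $x$, so $T$ is effectively a star centred at $x$ for the relevant part. For each $i$, the vertex $c_i$ and its leaves form a connected set lying in the neighbourhood bags $N_T[x]$. Because distinct neighbours of $x$ are nonadjacent, if $c_i\in B_y$ with $y\ne x$, then all of its leaves lie in $B_x\cup B_y$. Counting then gives that either $B_x$ contains at least $n/2$ vertices, or some $B_y$ absorbs a $c_i$ together with at least half of its $n$ leaves. In both cases the width is at least $n/2$. I expect the delicate part to be checking the edge-counting so that a single $c_i$ contributes enough: each $c_i$ must have about $n$ leaves, and $|B_x|$ is compared against $\sum_i(\text{leaves placed outside } B_x)$.
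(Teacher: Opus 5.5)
Your final construction (apex $z$ joined to everything, children $c_1,\dots,c_n$, each with $n$ leaves $w_{i,j}$ adjacent to $z$ and $c_i$) is exactly the paper's $G_n$. Your bag argument is the paper's proof: if $|B_x|<n/2$ then some $c_i$ lies in a bag $B_y$ with $xy\in E(T)$, its $n$ leaves are trapped in $B_x\cup B_y$, so $|B_x|+|B_y|\ge n+2$ and the counting is immediate. You should drop the two abandoned constructions and state the tree-depth check for this final graph explicitly (the height-$3$ rooted tree $z$--$c_i$--$w_{i,j}$, plus the triangle $zc_1w_{1,1}$).
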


\begin{proof}
Let $G_n$ be the graph with $V(G_n)=\{c\}\cup\{a_i, b_{i,j}: 1\le i, j\le n\}$,
with $E(G_n)=\{ca_i, cb_{i,j}, a_ib_{i,j}:\ 1\le i, j\le n\}$.
Let $F_n$ be the rooted tree of height $3$ with $V(F_n)=V(G_n)$ and $E(F_n)=\{ca_i, a_ib_{i,j}:\ 1\le i, j\le n\}$, and with $c$ as its root.
Then $G_n\subseteq clos(F_n)$, so $td(G_n)\le 3$. Moreover, since $G_n$ contains a triangle, $td(G_n)\ge 3$, so $td(G_n)=3$.

We claim that  $tpw(G_n)\ge \frac{n}{2}$. Assume to the contrary that there is a $T$-partition $(B_x:x\in V(T))$ of $G_n$ of width $\omega<\frac{n}{2}$ for some tree $T$.
Let $r\in V(T)$ be the unique vertex with $c\in B_r$. Since $|B_r|\le \omega<\frac{n}{2}$, some $a_i$, say $a_1$, is not in $B_r$. Assume that $a_1\in B_s$ for some vertex $s\in V(T)-\{r\}$. Then $rs\in E(T)$. Moreover, since $b_{1,j}$ is adjacent to both $c$ and $a_1$ for each $1\leq j\leq n$, either $b_{1,j}\in B_r$ or $b_{1,j}\in B_{s}$. So $n+2\le |B_r|+|B_s|\le 2\omega<n$ as $\omega<\frac{n}{2}$, 
a contradiction.
\end{proof}

\section{Acknowledgments}
This research was partially supported by grants from the Natural Sciences Foundation of Fujian Province (grant 2025J01486).

\section*{Declaration}
	\vspace{5pt}
	\noindent$\textbf{Conflict~of~interest}$
The authors declare that they have no known competing financial interests or personal relationships that could have appeared to influence the work reported in this paper.

	\vspace{5pt}
\noindent$\textbf{Data availability:}$
Data sharing is not applicable to this paper as no datasets were generated or analysed during the current study.

\end{document}